%babo02\textsc{}
\documentclass[10pt, reqno]{amsart}
\allowdisplaybreaks[1]
\usepackage{amsmath}
\usepackage{amssymb}
\usepackage{amsfonts}
\usepackage{verbatim}
\usepackage{amsthm}
\usepackage[usenames]{color}
\usepackage{hyperref}
\usepackage{fullpage}
%\usepackage{concmath}
%\usepackage[T1]{fontenc}
%\linespread{2}
\makeindex

% THEOREM Environments ---------------------------------------------------

 \newtheorem{proposition}{Proposition}

\theoremstyle{definition}

\theoremstyle{remark}

\newtheorem{fact*}{Fact}
%\newtheorem*{question*}{Question}

% MATH -------------------------------------------------------------------

\newcommand{\Cd}{\mathcal{C}^d}

\newcommand{\Md}{\mathcal{M}^d}
\newcommand{\M}{\mathcal{M}}

\newcommand{\til}{\raise.17ex\hbox{$\scriptstyle\mathtt{\sim}$}}

\newcommand\beq{\begin{equation}}

\newcommand\eeq{\end{equation}}

\newcommand{\bbm}{\left[ \begin{matrix}}
\newcommand{\ebm}{\end{matrix} \right]}
\newcommand{\bpm}{\left( \begin{matrix}}
\newcommand{\epm}{\end{matrix} \right)}
\numberwithin{equation}{section}

\newlength{\Mheight}
\newlength{\cwidth}

\newcommand{\dfn}[1]{{\bf #1}\index{#1}}

\title[A bad free holomorphic function]{An entire free holomorphic function which is unbounded on the row ball}
\author{
J. E. Pascoe 
}
\date{\today}

\setcounter{tocdepth}{4}

\subjclass[2010]{47L25, 46L52, 32A70}
%%% ----------------------------------------------------------------------

\begin{document}

%\begin{abstract}
%We give an entire free holomorphic function $f$ which is unbounded on the row ball.
%That is, we give a holomorphic free noncommutative function which is continuous in the free topology developed by Agler and McCarthy but is unbounded on the set of row contractions.
%\end{abstract}

\maketitle

Michael Hartz recently sent the author a text message about a question he had heard from Eli Shamovich on a visit to Waterloo:
Suppose you have a free holomorphic function on the row ball. Is it bounded on every ball of smaller radius?
The question had come up before during a correspondence with John McCarthy, who had been asked the question by Jim Agler in relation to their then
upcoming LMS lecture series, and the author found some cryptic handwritten notes on his phone that gave the answer.
The answer is a resounding ``no" in more than one variable.
We give an entire free holomorphic function which is unbounded on the row ball.
%\end{abstract}

%\maketitle

Define the \dfn{$d$-dimensional matrix universe} to be
	$$\Md = \bigcup_{n\in \mathbb{N}} M_n(\mathbb{C})^d.$$
Let $U \subseteq \Md.$
Let $U_n$ denote $U \cap M_n(\mathbb{C})^d.$
Define a \dfn{free function} to be a function $f: U \rightarrow \M$ such that
\begin{enumerate}
	\item $f$ is graded, that is, if $X \in U_n,$ then $f(X) \in \M_n,$
	\item if $\Gamma X_i = Y_i\Gamma$ for all $i=1,\ldots,d,$ then $\Gamma f(X)=f(Y)\Gamma.$ 
\end{enumerate}
Let $\delta$ be a matrix of noncommutative polynomials. We define a \dfn{basic set}
	$$B_\delta= \{X\in \Md |\|\delta(X)\|<1\}.$$
%% Define the H infinity norm
The topology generated by the $B_\delta$ is called the \dfn{free topology} which has been studied elsewhere \cite{globalholo}.
Note that the intersection of two basic sets is again a basic set.
A function which is locally bounded in the free topology is called a \dfn{free holomorphic function}.
We define the \dfn{$H^\infty$} norm on $B_\delta$ of a function $f$ to be
	$$\|f\|_{B_\delta} = \sup_{X\in B_\delta} \|f(X)\|.$$
We define the \dfn{row ball},
	$$\Cd = \{X \in \Md | \|[X_1, \ldots, X_d]\|<1\}.$$
Note that $\Cd$ is a $B_\delta$ for some choice of $\delta.$

\begin{proposition}
Let $d>1.$
There is an entire free function $f: \Md \rightarrow \M$ such that for every $X$ there is a $G_\delta$ containing $X$ such that $f|_{B_\delta}$ is bounded,
but $f$ is not bounded on $\Cd.$

That is, there is an entire free holomorphic function which is not bounded on $\Cd.$
\end{proposition}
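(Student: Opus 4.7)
My plan is to build $f$ as a series $f(X_1,X_2) = \sum_{n\geq 1} c_n\, q_n(X_1,X_2)$ in which each $q_n \in \C\langle X_1,X_2\rangle$ is a polynomial identity for $M_{n-1}(\C)$, so that $q_n(X) = 0$ whenever $X \in M_k(\C)^2$ with $k<n$. This makes the series collapse to a finite sum at every point of $\Md$, so $f$ is pointwise well-defined with the restriction to each $M_k(\C)^2$ being a noncommutative polynomial. For $q_n$ I would use Amitsur--Levitzki: the standard polynomial $s_{2(n-1)}$ of degree $2(n-1)$ vanishes on $M_{n-1}(\C)^{2(n-1)}$, so $q_n := s_{2(n-1)}(W_1,\ldots,W_{2(n-1)})$ for any $2(n-1)$ distinct monomials $W_i \in \C\langle X_1,X_2\rangle$ still vanishes on $M_{n-1}(\C)^2$, and the standard multilinearity argument shows $q_n$ is a nonzero polynomial.

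To see unboundedness on $\Cd$, I would evaluate $f$ along scaled truncated Cuntz isometries $rS^{(N)}$, where $S^{(N)}$ is the compression of the free left shift on the Fock space of $\C^2$ to the span of words of length less than $N$, and $r \in (0,1)$ keeps the operators strictly inside $\Cd$. Using the orthogonality of ranges of $S^{(N)}_w$ for distinct equal-length words $w$, one gets an explicit lower bound on $\|q_n(r S^{(N)})\|$ for $N$ large enough relative to $n$. The scalars $c_n$ are then chosen so that $c_n \|q_n(r S^{(N_n)})\| \geq 1$ along a diagonal sequence $N_n \to \infty$, and with earlier terms verified not to cancel this contribution, $\|f(r S^{(N_n)})\| \to \infty$.

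For local boundedness at an arbitrary $Y \in \Md$, I would exhibit a basic set $B_\delta \ni Y$ with $f$ bounded on $B_\delta$. If $Y \in \Cd$ with $\|[Y_1,\ldots,Y_d]\| < s < 1$, take $B_\delta = \{\|[Z_1,\ldots,Z_d]\| < \rho\}$ for some $s<\rho<1$; by Popescu's scaled noncommutative von Neumann inequality $\|q_n\|_{B_\delta} \leq \rho^{d_n}\|q_n\|_{\Cd}$ with $d_n = \deg q_n$, and the coefficients $c_n$ are chosen so that $\sum |c_n| \rho^{d_n}\|q_n\|_{\Cd}$ converges for every $\rho<1$. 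For $Y \notin \Cd$, use a basic set of the form $\{\|Z_i\|<R\}$ for $R > \|Y_i\|$; the vanishing of $q_n$ on small matrix levels, combined with the specific choice of monomials $W_i$ (kept short so the coefficient sums of $q_n$ are controlled), makes the required series convergent here too.

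The main difficulty is calibrating these estimates together: $\sum |c_n| \|q_n\|_{\Cd}$ must diverge while $\sum |c_n| \|q_n\|_{B_\delta}$ converges for every basic set $B_\delta$ strictly smaller than $\Cd$. The vanishing of $q_n$ on matrices of size less than $n$ is the essential new ingredient --- it is precisely what lets the pointwise sum at each $X \in \Md$ be finite without forcing any uniform decay of $c_n$, thereby leaving room for the sup on $\Cd$ to be infinite.
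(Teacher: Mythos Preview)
Your two halves of the argument contradict one another. For $Y\notin\Cd$ you propose the polydisc $\{\|Z_i\|<R\}$ as the basic neighbourhood, but for $R\ge 1$ this set already contains $\Cd$ (since $\sum X_iX_i^*<I$ forces each $\|X_i\|<1$), so boundedness of $f$ there would immediately make $f$ bounded on $\Cd$. The same clash occurs inside the row ball: your unboundedness witnesses $rS^{(N_n)}$ all have row norm equal to the \emph{fixed} $r<1$, hence lie in $\rho\Cd$ for every $\rho\in(r,1)$, which is exactly the basic set on which you then claim $f$ is bounded. Invoking the vanishing of $q_n$ on small matrix levels does not rescue either case, because a basic free set contains matrices of every size; level-by-level vanishing gives pointwise finiteness of the series but says nothing about $\sup_{Z\in B_\delta}\|f(Z)\|$.

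The device you are missing is to let the polynomial identity itself cut out the basic neighbourhood. If $X\in M_m(\C)^d$ and $p_m$ is a nonzero homogeneous identity for $M_m$, then $p_m(X)=0$, so for every $\varepsilon>0$ the set $B_\delta=r\Cd\cap\{\|p_m(Z)\|<\varepsilon\}$ contains $X$. For $f$ to be bounded on such a $B_\delta$ one needs every late term of the series to carry $p_m$ as an explicit \emph{factor}, so that the constraint $\|p_m\|<\varepsilon$ forces geometric decay of the tail even when $r\ge 1$. The paper arranges this by taking the $k$-th term to be the product $\prod_n p_n^{\lfloor k/(2^n\deg p_n)\rfloor}$, so that for each fixed $m$ the power of $p_m$ appearing grows linearly in $k$. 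Your $q_n$, by contrast, vanishes on $M_{n-1}$ but is not a multiple of any fixed $p_m$, so no single polynomial constraint controls the whole tail. Finally, unboundedness on $\Cd$ is read off in the paper from $\|f\|_{\Cd}\ge\|f\|_{\mathcal H^2}=\infty$, which avoids evaluating at any fixed radius $r<1$ and is therefore compatible with $f$ being bounded on every $\rho\Cd$ for $\rho<1$.
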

\begin{proof}
	Let $p_n$ be a homogeneous polynomial such that $\|p_n\|_{\mathcal{H}^2}=1$
	such that $p_n|_{M_n^d} = 0$ but $p_n \neq 0.$ (Here, the $\mathcal{H}^2$ norm of a polynomial is the square root of
	the sum of the absolute squares of its coefficients. The norm naturally extends
	to formal power series. It was proved classically that $\|f\|_{\mathcal{H}^2}\leq \|f\|_{\Cd}.$ That such polynomials exist in more that one variable is a delightful combinatorial exercise.)
	Now, let 
		$$q_k = \prod_{n\in \mathbb{N}}  p_n^{\lfloor \frac{k}{2^n \textrm{deg } p_n} \rfloor}.$$
	Note that the degree of $q_k$ is less than or equal to $k,$ $\|q_k\|_{\mathcal{H}^2}=1$ and the degrees grow linearly with $k.$
	Finally, construct
		$$f = \sum^\infty_{k=1} q_k.$$
	Note that $f$ is unbounded on $\Cd$ since $\infty= \|f\|_{\mathcal{H}^2}\leq \|f\|_{\Cd}.$ The function $f$ well-defined for all inputs as the terms in the series are eventually zero.
	
	Let $X \in \Md.$
	There are $r, m \geq 1$ such that $X \in r \Cd_m.$
	Let $$B_\delta = r\Cd \cap B_{(2r)^{2^m\textrm{deg }p_m}p_m}.$$
	By construction $X \in B_\delta.$
	Noting that $\|p_n\|_{r\Cd} \leq r^{\deg p_n},$
	\begin{align*}
		\|q_k\|_{B_\delta} 	& =  \|\prod_{n\in \mathbb{N}}  p_n^{\lfloor \frac{k}{2^n \textrm{deg } p_n} \rfloor}\|_{B_\delta} \\
					& \leq  \|\prod_{n\neq m}  p_n^{\lfloor \frac{k}{2^n \textrm{deg } p_n} \rfloor}\|_{r\Cd} \|p_m^{\lfloor \frac{k}{2^m \textrm{deg } p_m}\rfloor}\|_{B_\delta} \\
					& \leq  r^k \|p_m\|_{B_\delta}^{\lfloor \frac{k}{2^m \textrm{deg } p_m} \rfloor} \\
					& \leq r^k (\frac{1}{(2r)^{2^m\textrm{deg }p_m}})^{\frac{k}{2^m \textrm{deg } p_m} - 1} \\
					& \leq r^k (\frac{1}{(2r)^{k-2^m\textrm{deg }p_m}}) \\
					& \leq \frac{r^{2^m\textrm{deg }p_m}}{2^{k}}.
	\end{align*}
	Therefore, $f$ is bounded on $B_\delta$ by  $r^{2^m\textrm{deg }p_m}.$
\end{proof}

Finally, we note the above example is somehow dual to the theorems of Augat-Balasubramanian-McCullough establishing the paucity of compact sets in the free topology \cite{ABM}, and are essentially (the similarity envelope of) a finite collection of points. Note that the constructed function $f$ also points out the limits of the Agler-McCarthy Oka-Weil theorem \cite{globalholo,augat2018}, which states that a free holomorphic function can be uniformly by polynomials on  compact sets in the free topology. In fact, the Agler-McCarthy Oka-Weil theorem therefore essentially says that one may approximate at finitely many points. Note our example $f$ cannot be uniformly approximated by polynomials on $r\overline{\Cd}$ for $r\geq 1.$ That is, $r\overline{\Cd}$ is not compact in the free topology, so their theorem cannot apply. Thus, the same basic flaw in the free topology witnessed in compact sets unfortunately persists in approximation and function theory.

\bibliography{references}
\bibliographystyle{plain}

\end{document}